\DeclareMathOperator{\Vol}{Vol}
\newcommand{\E}{\Bbb{E}}
\newcommand{\Z}{\Bbb{Z}}
\newcommand{\R}{\Bbb{R}}
\newcommand{\G}{\Bbb{G}}
\renewcommand{\P}{\Bbb{P}}
\newcommand{\T}{\Bbb{T}}
\renewcommand{\AA}{\mathcal{A}}
\newcommand{\ep}{\epsilon}
\newtheorem*{rep@theorem}{\rep@title}
\newcommand{\newreptheorem}[2]{%
\newenvironment{rep#1}[1]{%
 \def\rep@title{#2 \ref{##1}}%
 \begin{rep@theorem}}%
 {\end{rep@theorem}}}
\newtheorem{thm}{Theorem}
\newtheorem{result}{Result}[section]
\newtheorem{lem}[result]{Lemma}
\newtheorem{prp}[result]{Proposition}
\newtheorem{cor}[result]{Corollary}
\theoremstyle{definition}
\newtheorem{rmk}[result]{Remark}
\newtheorem*{defn}{Definition}
\newtheorem*{ack}{Acknowledgements}
\theoremstyle{remark}
\newcommand{\hide}[1]{}
\newcommand{\rough}[1]{}%\textbf{\textcolor{blue}{#1}}}
\definecolor{darkgreen}{RGB}{75,150,75}
\newcommand{\review}[1]{}%\textcolor{darkgreen}{#1}}
\newcommand{\hides}[1]{}%1}
\newcommand{\pub}[1]{}%\textcolor{purple}{#1}}
\title{Corner-free sets via the torus}
\author{Zach Hunter}
\email{zachary.hunter@exeter.ox.ac.uk}
\date{\today}
\begin{document}

\maketitle

\begin{abstract}
    A \textit{corner} is a triple of points in $\Z^2$ of the form $(x,y),(x+d,y),(x,y+d)$ where $d\neq 0$. One can think of them as being 2D-analogues to 3-term arithmetic progressions.
    
    In this short note, we extend ideas of Green-Wolf from this latter setting to the former, achieving slightly better constructions of corner-free sets.
\end{abstract}

\section{Introduction}

A (non-trivial) \textit{corner} is a set of three points in $\Z^2$ of the form $(x,y),(x+d,y),(x,y+d)$ for some $x,y,d\in \Z$ with $d\neq 0$. We say $A\subset \Z^2$ is \textit{corner-free} if it does not contain any non-trivial corners.

Let $r_{\angle}(N)$ denote the cardinality of the largest corner-free subset of the grid $[N]^2$ (where $[N]:=\{1,\dots,N\}$). For many years, the best known lower bound for $r_{\angle}(N)$ came from Behrend's construction and was of the form $N^2 2^{-(c+o(1))\sqrt{\log_2 N}}$ for $c = 2\sqrt{2}\approx 2.828\dots$.

Recently, improving upon work by Linial and Shraibman \cite{linial}, Green constructed corner-free sets of size $N^2 2^{-(c+o(1))\sqrt{\log_2 N}}$ where $c = 2\sqrt{2\log_2 \frac{4}{3}}\approx 1.822\dots$ \cite{green}.

We improve the lower order terms of this bound by using torus constructions, in a similar fashion to the work of \cite{wolf}.

\begin{thm}\label{main} Let $D,N$ be positive integers. There exists a corner-free subset $A \subset [N]^2$ where
\[|A| \gg \sqrt{D}(3/4)^DN^{2-2/D}\](here the implicit constant is independent of $D$).
\end{thm}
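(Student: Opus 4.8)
The plan is to run, for corners, the torus version of Behrend's construction that Green and Wolf \cite{wolf} used to sharpen the lower‑order terms for three‑term progressions, retaining Green's observation \cite{green} that the ``corner‑safe'' part of the torus has relative measure $(3/4)^D$ rather than the $(1/2)^D$ one gets from a crude convexity restriction, and letting the extra $\sqrt{D}$ fall out of a central limit count for grid points on a shell, as in \cite{wolf}. Concretely: represent $\T^D=[-\tfrac12,\tfrac12)^D$ and let $\phi(u)=\sum_i u_i^2$, which is convex on the interior, so for a ``corner'' $(u),(u+t),(u-t)$ with all three on a shell $\{\phi\in[R,R+\eta)\}$ one gets $\|t\|^2<\eta$ as long as no coordinate wraps around $\pm\tfrac12$. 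I would take the model set to be $\mathcal{B}_\eta:=\{(x,y)\in\T^D\times\T^D:\phi(x-y)\in[R,R+\eta)\}\cap\prod_i\Omega_i$, where each $\Omega_i\subseteq\T\times\T$ has measure $3/4$ and is engineered so that membership of $(x_i,y_i),(x_i+t_i,y_i),(x_i,y_i+t_i)$ in $\Omega_i$ keeps the values $x_i-y_i$ and $(x_i-y_i)\pm t_i$ on the convex branch — so a corner of $\mathcal{B}_\eta$ still forces $\|t\|^2<\eta$ — while $3/4$ is the best available proportion, mirroring the fact that the largest corner‑free subset of $[2]^2$ omits only one of its four points; on a grid, where nonzero differences are bounded below, $\|t\|^2<\eta$ with $\eta$ small will then force $t=0$.

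To descend to $[N]^2$ I would embed it into $\T^D\times\T^D$ by $n\mapsto\alpha n+\theta$ and set $A=\{(a,b)\in[N]^2:(\alpha a+\theta,\alpha b+\theta)\in\mathcal{B}_\eta\}$; a corner of $[N]^2$ maps to a corner of $\mathcal{B}_\eta$ with difference vector $\alpha d$, so it forces $\|\alpha d\|^2<\eta$, and for random $\alpha$ a union bound over the $O(N)$ admissible $d$ gives $\|\alpha d\|^2\gtrsim D\,N^{-2/D}$ for all $d\neq 0$ — hence $A$ is genuinely corner‑free once $\eta$ is a small enough multiple of $D\,N^{-2/D}$. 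Since $\alpha n+\theta$ equidistributes for random $\alpha,\theta$, some choice gives $|A|\gg\mathrm{vol}(\mathcal{B}_\eta)\cdot N^2$, and $\mathrm{vol}(\mathcal{B}_\eta)$ is, up to constants, the proportion $(3/4)^D$ of the $\Omega_i$'s times $\eta$ times the density at the level $R$ of $\phi(x-y)$ conditioned on the $\Omega_i$'s; centring $R$ at the conditional mean, the central limit theorem makes that density $\asymp 1/\sqrt{D}$, so $\mathrm{vol}(\mathcal{B}_\eta)\gg(3/4)^D\cdot D\,N^{-2/D}\cdot D^{-1/2}=\sqrt{D}(3/4)^D N^{-2/D}$ and therefore $|A|\gg\sqrt{D}(3/4)^D N^{2-2/D}$ (this works for all $N$, not just perfect powers).

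The real obstacle, I expect, is making the two halves above mesh quantitatively: $\eta$ must be pushed all the way up to order $D\,N^{-2/D}$ so that the random embedding clears away every grid corner, yet at that precise window size the central limit estimate must still exhibit a full $\sqrt{D}$; the regions $\Omega_i$ must simultaneously reach measure $3/4$ and keep every relevant coordinate quantity on the convex branch for all admissible $t_i$ — the point where Green's combinatorial input is needed — and one must verify that the union bound and the equidistribution cost only absolute constants rather than eroding the $(3/4)^D$. Reconciling ``thin enough to be corner‑free'' with ``large and structured enough to have size $\sqrt{D}(3/4)^D N^{2-2/D}$'' is the crux.
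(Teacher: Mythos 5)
Your construction is essentially the paper's, and most of the quantitative bookkeeping lines up: the per-coordinate set of measure $3/4$ on which the torus locally looks like $\R$ for corner differences is the paper's $S\subset\G=\T\times\T$ (namely $\{c_1+c_2\in[1/2,3/2)\}$); your $\phi(x-y)=\sum_i(x_i-y_i)^2$ is the paper's $\|\phi(\cdot)\|_2^2$; the parallelogram law turns a corner into a small difference vector; and the choice $\eta\asymp DN^{-2/D}$ with a shell of conditional density $\asymp D^{-1/2}$ reproduces $\sqrt{D}(3/4)^D N^{-2/D}$. One cosmetic difference: the paper does not appeal to a local CLT to get the $D^{-1/2}$ density — it uses Hoeffding plus pigeonhole to find \emph{some} radius window of width $\delta$ carrying mass $\gg\delta/\sqrt{D}$ — which is both simpler and avoids any need for a genuine local limit theorem.

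There is, however, one genuine gap in your write-up: you embed $[N]^2$ by $(a,b)\mapsto(\alpha a+\theta,\alpha b+\theta)$ with the \emph{same} shift $\theta$ in both coordinates, and then claim ``some choice gives $|A|\gg\Vol(\mathcal{B}_\eta)N^2$ by equidistribution.'' The difficulty is that the corner-freeness condition must be arranged using $\alpha$ alone (by a union bound over $d$), and then $\theta$ must be chosen afterward; but for a \emph{fixed} $\alpha$, the map $\theta\mapsto(\alpha a+\theta,\alpha b+\theta)$ only sweeps a coset of the diagonal in $\T^D\times\T^D$, not all of it. In particular the shell condition $\phi((\alpha a+\theta)-(\alpha b+\theta))=\phi(\alpha(a-b))\in[R,R+\eta)$ does not depend on $\theta$ at all, so $\E_\theta[|A|]$ is not $N^2\Vol(\mathcal{B}_\eta)$; you would instead need quantitative discrepancy bounds for $d\mapsto\alpha d$ hitting a thin shell, which is a substantial extra burden. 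You also cannot simply average over $(\alpha,\theta)$ jointly and intersect with the good-$\alpha$ event, since $N^2\,\P(\text{bad }\alpha)$ is generally much larger than $N^2\Vol(\mathcal{B}_\eta)$. The paper circumvents this by using a shift $\mu\in\G^D$, i.e.\ \emph{independent} translations in the two torus coordinates of each factor: then for any fixed direction $\theta$, the shift $\mu$ alone makes $f(x,y)$ exactly uniform on $\G^D$, so $\E_\mu[|A|]=N^2\Vol(S_{r,\delta;D})$ on the nose, and the two choices (direction for corner-freeness, shift for density) decouple cleanly. Replacing your single shift $\theta$ by independent shifts in the $x$- and $y$-coordinates closes the gap and recovers the paper's argument.
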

Consequently, plugging in $D = \left \lfloor \sqrt{\frac{\log_2 N}{\log_2(2/\sqrt{3})}}\right\rfloor $, we get:
\begin{cor}We have
\[r_\angle(N) \gg \log_2^{1/4} N\frac{N^2}{2^{2\sqrt{2\log_2\frac{4}{3}}\sqrt{\log_2 N}}}.\]
\end{cor}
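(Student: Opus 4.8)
\emph{Proof proposal.} The plan is to deduce the corollary from Theorem~\ref{main} by choosing $D$ to minimize the $N$-dependent part of the bound. Write $L:=\log_2 N$ and $\beta:=\log_2\tfrac43$, and note that $\log_2\tfrac{2}{\sqrt3}=1-\tfrac12\log_2 3=\tfrac12(2-\log_2 3)=\tfrac12\beta$, so that the value prescribed in the corollary is precisely $D=\bigl\lfloor\sqrt{2L/\beta}\bigr\rfloor$. Taking $\log_2$ of the bound in Theorem~\ref{main} gives $|A|\gg\sqrt{D}\,N^{2}\,2^{-f(D)}$, where $f(D):=D\beta+\tfrac{2L}{D}$ gathers the two terms $D\log_2\tfrac43$ and $\tfrac2D\log_2 N$ coming from $(3/4)^D$ and $N^{-2/D}$.

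Next I would optimize $f$. By AM--GM, $f(D)\ge 2\sqrt{2\beta L}$ for all $D>0$, with equality at $D_0:=\sqrt{2L/\beta}$, and a direct evaluation gives $f(D_0)=2\sqrt{2\beta L}=2\sqrt{2\log_2\tfrac43}\,\sqrt{\log_2 N}$. Moreover $f''(D)=4L/D^{3}$, so near $D_0\asymp\sqrt L$ one has $f''=\Theta(L^{-1/2})$; hence for the actual integer choice $D=\lfloor D_0\rfloor$, which satisfies $|D-D_0|<1$ (and $D\ge 1$ once $N$ exceeds an absolute constant), a second‑order Taylor estimate around $D_0$ gives $f(D)=f(D_0)+O(L^{-1/2})=2\sqrt{2\log_2\tfrac43}\,\sqrt{\log_2 N}+o(1)$. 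Consequently
\[
2^{-f(D)}\;\asymp\;2^{-2\sqrt{2\log_2\frac43}\,\sqrt{\log_2 N}},\qquad\text{while}\qquad \sqrt{D}=\sqrt{\lfloor D_0\rfloor}\gg\sqrt{D_0}=(2L/\beta)^{1/4}\gg(\log_2 N)^{1/4}.
\]
Multiplying these with $N^{2}$ and using $r_\angle(N)\ge|A|$ yields $r_\angle(N)\gg(\log_2 N)^{1/4}\,N^{2}\,2^{-2\sqrt{2\log_2\frac43}\sqrt{\log_2 N}}$, with the finitely many small $N$ absorbed into the implied constant.

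Since Theorem~\ref{main} is taken as given, the only delicate point is that replacing the real optimizer $D_0$ by the integer $\lfloor D_0\rfloor$ must not distort the exponent — and this is exactly what the bound $f''(D_0)=o(1)$ ensures: the exponent is flat to within $o(1)$ in a unit neighbourhood of its minimum, so the rounding costs only a harmless $2^{o(1)}=\Theta(1)$ factor (one could equally use any integer within $1$ of $D_0$). The genuinely substantive part of the paper is of course the construction underlying Theorem~\ref{main}, obtained by adapting the torus method of Green--Wolf to the corner setting; the corollary itself is just the one–variable optimization above.
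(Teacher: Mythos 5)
Your proposal is correct and is exactly the calculation the paper leaves implicit: the paper simply says ``plugging in $D=\lfloor\sqrt{\log_2 N/\log_2(2/\sqrt3)}\rfloor$'' and states the result, and you have verified that this $D$ is the AM--GM optimizer of $f(D)=D\log_2\tfrac43+\tfrac{2}{D}\log_2 N$, that rounding to the nearest integer perturbs $f$ by only $O((\log N)^{-1/2})$ via the second-derivative estimate, and that $\sqrt{D}\gg(\log_2 N)^{1/4}$. This is the same approach, carried out with the details the paper omits.
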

\begin{rmk}Though the lower bound of $r_\angle(N)$ stated in \cite{green} hides lower-order terms with a $2^{-o(\sqrt{\log N})}$ factor, a careful analysis of their argument obtains \[r_\angle(N)\gg \log_2^{-1/4}N\frac{N^2}{2^{2\sqrt{2\log_2\frac{4}{3}}\sqrt{\log_2 N}}}.\] Thus, Theorem~\ref{main} improves things by a factor of $\log_2^{1/2} N$ (which is of identical shape to the improvement to $r_3(N)$ given by the aforementioned work \cite{wolf}).

\end{rmk}

\begin{ack}We thank Ben Green for informing us that this problem was of interest. We also thank Matt Kwan for comments which helped improve the exposition. Lastly we thank Fred Tyrrell for finding several typographical errors.

Some of this work was prepared at IST Austria, the author thanks them for their hospitality.

\end{ack}

\section{Preliminaries}
\subsection{Standard notation}

We shall use some standard asymptotic notation ($O,o,\gg$). Additionally, we will sometimes write $a\pm b$ to denote a quantity $x$ where $a-b\le x\le a+b$.

We will write $\T$ to denote the torus, $\R/\Z$. And similarly we write $\T^D$ to denote the $D$-dimensional torus, $\R^D/\Z^D$. We define the projection $\pi:\R\to \T;x\mapsto x+\Z$, and as an abuse of notation let $\pi^{-1}$ denote the inverse of $\pi$ restricted to $[0,1)$.

\subsection{Specialized definitions}\label{specialized}

It will be useful to think about tori with two distinct coordinates, so we can draw comparisons to the grid $[N]^2$. Thus we write $\G$ to denote $\T\times \T$ equipped with two coordinate maps $c_1,c_2:\G\to \R$, so that for $v = (a,b) \in \G$, we have
\[c_1(v) = \pi^{-1}(a),\quad c_2(v) = \pi^{-1}(b).\]\hide{For $\alpha \in \T$, we write $\alpha \cdot e_1$ to denote $(\alpha,0)\in \G$ and $\alpha \cdot e_2$ to denote $(0,\alpha) \in \G$.}

We now define $\psi:\G \to \R$ so that \[\psi(\theta) =c_1(\theta)+c_2(\theta).\] We also write $S$ to denote $\psi^{-1}([1/2,3/2))\subset \G$.

Lastly, we define $\phi:\G \to \R$ so that \[\phi(\theta) = c_1(\theta)-c_2(\theta),\] and furthermore extend $\phi$ to $\G^D\to \R^D$ coordinate-wise, so that $\phi(\theta_1,\dots,\theta_D) = (\phi(\theta_1),\dots,\phi(\theta_D))$.

\section{Grid lemmas}\label{gridlem}

We remind the reader to consult Section~\ref{specialized} for the definitions of $\G,\psi,S,\phi$.

We first need the following lemma.

\begin{lem}\label{psi iso}For any $\theta \in \G$, and any $\alpha,\beta \in \T$ such that $\{\theta + (\alpha,\beta),\theta+(\beta,\alpha)\} \subset S$, we have that $\psi(\theta+ (\alpha,\beta))=\psi(\theta+ (\beta,\alpha))$.
\begin{proof}First note that \[\pi(\psi(\theta+(\alpha,\beta))) = \pi(\psi(\theta))+\alpha +\beta= \pi(\psi(\theta+(\beta,\alpha))).\]Thus $\psi(\theta+(\alpha,\beta))-\psi(\theta+(\beta,\alpha)) \in \Z$. 

Recalling the assumption that \[\psi(\{\theta+(\alpha,\beta),\theta+(\beta,\alpha)\})\subset \psi(S) \subset [1/2,3/2),\] we see that $\psi(\theta+(\alpha,\beta))-\psi(\theta+(\beta,\alpha)) \in (-1,1)$. 

Combining these two observations, $\psi(\theta+(\alpha,\beta))-\psi(\theta+(\beta,\alpha))$ must equal $0$, as desired.\end{proof}
\end{lem}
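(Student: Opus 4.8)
The plan is to play off the $\R$-valued map $\psi$ against its reduction modulo $\Z$. While $\psi:\G\to\R$ is not additive, the composite $\pi\circ\psi:\G\to\T$ is precisely the group homomorphism $(a,b)\mapsto a+b$, and this is manifestly symmetric under the way we translate $\theta$. So the two values $\psi(\theta+(\alpha,\beta))$ and $\psi(\theta+(\beta,\alpha))$ must be congruent mod $\Z$; the hypothesis that both translates lie in $S$ will then force the integer difference to vanish.

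In more detail, first I would observe that for any $v=(a,b)\in\G$ one has $\pi(c_1(v))=a$ and $\pi(c_2(v))=b$, since $\pi\circ\pi^{-1}$ is the identity on $\T$; hence $\pi(\psi(v))=a+b$. Applying this with $v=\theta+(\alpha,\beta)$ and with $v=\theta+(\beta,\alpha)$ gives
\[\pi\big(\psi(\theta+(\alpha,\beta))\big)=\pi(\psi(\theta))+\alpha+\beta=\pi\big(\psi(\theta+(\beta,\alpha))\big),\]
so that $\psi(\theta+(\alpha,\beta))-\psi(\theta+(\beta,\alpha))\in\Z$.

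Next I would invoke the definition $S=\psi^{-1}([1/2,3/2))$. The hypothesis says both $\theta+(\alpha,\beta)$ and $\theta+(\beta,\alpha)$ lie in $S$, so both $\psi(\theta+(\alpha,\beta))$ and $\psi(\theta+(\beta,\alpha))$ lie in the interval $[1/2,3/2)$, which has length $1$; therefore their difference lies in $(-1,1)$. The only integer in $(-1,1)$ is $0$, which yields $\psi(\theta+(\alpha,\beta))=\psi(\theta+(\beta,\alpha))$, as claimed.

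There is no serious obstacle here: the lemma is a soft statement whose whole content is that the ambiguity in lifting $\psi$ from $\T$ back to $\R$ is an integer, and that the window $[1/2,3/2)$ is narrow enough to kill that ambiguity. The one point needing a little care is to keep the genuine real number $\psi(v)$ notationally distinct from its class $\pi(\psi(v))\in\T$, and to remember that additivity under translations of $\G$ holds only after reducing mod $\Z$ — which is exactly why the interval $[1/2,3/2)$ (rather than, say, $[0,1)$) has to be built into the definition of $S$.
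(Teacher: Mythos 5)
Your proof is correct and follows the paper's argument exactly: reduce mod $\Z$ to see the difference is an integer, then use that both values lie in the length-$1$ window $[1/2,3/2)$ to conclude the difference is $0$. The only addition is your brief justification that $\pi\circ\psi$ is the genuine homomorphism $(a,b)\mapsto a+b$, which the paper leaves implicit.
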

We can now deduce that addition into $S$ ``behaves nicely'' with respect to $\phi$. 
\begin{lem}\label{phi basic} Consider $\theta \in \G$ and $\alpha\in \T$ satisfying $\{\theta+(\alpha,0),\theta+(0,\alpha)\}\subset S$. Then \[\phi(\theta+(\alpha,0)) -\phi(\theta)=\phi(\theta)- \phi(\theta+(0,\alpha)).\]
\begin{rmk}To illustrate what Lemma~\ref{phi basic} is saying, we consider an example. Taking $\theta = (\pi(3/4),\pi(1/4)) \in \G$ and $\alpha = \pi(x)$ for some $x\in (1/4,3/4)$, we have 
\[\phi(\theta +(\alpha,0)) -\phi(\theta) = (x-1/4)-(3/4-1/4)= x-3/4,\]
\[\phi(\theta)-\phi(\theta+(0,\alpha)) = (3/4-1/4)-(3/4-1/4-x)= x,\]which aren't equal. This is due to the fact that adding $\alpha$ to the first coordinate makes us ``wrap around'', but this doesn't happen for the second coordinate.  

We wish to avoid this, so that we can later make use of the geometric insight that in $\R^D$ (where things don't wrap around), lines intersect spheres in at most two points.
\end{rmk}

\begin{proof}By Lemma~\ref{psi iso} (with $\beta = 0$), we have that $\psi(\theta+(\alpha,0)) = \psi(\theta+(0,\alpha))$. Hence with $\Delta_1 := c_1(\theta+(\alpha,0))-c_1(\theta+(0,\alpha)), \Delta_2 := c_2(\theta+(\alpha,0))-c_2(\theta+(0,\alpha))$, we get
\begin{align*}
    0 &= \psi(\theta+(\alpha,0)) - \psi(\theta+(0,\alpha))\\
    &= \Delta_1+\Delta_2\\
    &\implies \Delta_1 = -\Delta_2.\\
\end{align*} 

We conclude by noting 
\[\Delta_1 +\phi(\theta) = \phi(\theta+(\alpha,0))\]and
\[\phi(\theta)+\Delta_2 = \phi(\theta+(0,\alpha))\](here we used the facts that $c_1(\theta+(0,\alpha)) = c_1(\theta)$ and $c_2(\theta+(\alpha,0)) = c_2(\theta)$). The result follows from some minor rearranging.
\end{proof}
\end{lem}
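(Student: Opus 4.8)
The plan is to reduce the statement to the already-established Lemma~\ref{psi iso} and then simply unwind the definitions of $\psi$ and $\phi$ in terms of the coordinate maps $c_1,c_2$. The only real content is Lemma~\ref{psi iso}; everything else is bookkeeping with $c_1$ and $c_2$.

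First I would invoke Lemma~\ref{psi iso} with $\beta = 0$. The hypothesis $\{\theta+(\alpha,0),\theta+(0,\alpha)\}\subset S$ is exactly what that lemma requires, so it gives $\psi(\theta+(\alpha,0)) = \psi(\theta+(0,\alpha))$. This is the crucial input: it encodes the fact that, under our containment assumption, no ``wrap-around'' discrepancy occurs, and it is the only place the hypothesis $\subset S$ gets used.

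Next I would use the two trivial observations that translating by $(\alpha,0)$ fixes the second coordinate and translating by $(0,\alpha)$ fixes the first, i.e. $c_2(\theta+(\alpha,0)) = c_2(\theta)$ and $c_1(\theta+(0,\alpha)) = c_1(\theta)$. Expanding $\psi = c_1 + c_2$ in the identity from the previous paragraph and cancelling the equal terms, one obtains $c_1(\theta+(\alpha,0)) - c_1(\theta) = c_2(\theta+(0,\alpha)) - c_2(\theta)$; write $\Delta$ for this common value.

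Finally I would substitute $\phi = c_1 - c_2$ into both sides of the claimed identity and apply the same two coordinate-invariance facts: the left-hand side $\phi(\theta+(\alpha,0)) - \phi(\theta)$ collapses to $c_1(\theta+(\alpha,0)) - c_1(\theta) = \Delta$, and the right-hand side $\phi(\theta) - \phi(\theta+(0,\alpha))$ collapses to $-\bigl(c_2(\theta) - c_2(\theta+(0,\alpha))\bigr) = \Delta$, so the two sides agree. I do not expect a genuine obstacle here once Lemma~\ref{psi iso} is available; the one point to handle with care is the sign when expanding $\phi$ on the $(0,\alpha)$-translate, since a careless computation there would reintroduce precisely the wrap-around error illustrated in the remark.
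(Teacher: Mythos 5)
Your proof is correct and follows essentially the same route as the paper: invoke Lemma~\ref{psi iso} with $\beta=0$, use the coordinate-invariance facts $c_2(\theta+(\alpha,0))=c_2(\theta)$ and $c_1(\theta+(0,\alpha))=c_1(\theta)$, and unwind $\psi=c_1+c_2$ and $\phi=c_1-c_2$. The only cosmetic difference is bookkeeping: the paper introduces $\Delta_1,\Delta_2$ as differences between the two translates and shows $\Delta_1=-\Delta_2$, whereas you directly identify the common value $\Delta=c_1(\theta+(\alpha,0))-c_1(\theta)$, which is in fact exactly the $\Delta$ that appears in the subsequent Corollary~\ref{phi iso}.
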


Inspecting the proof above, we obtain the following corollary.
\begin{cor} \label{phi iso} Consider $\theta \in \G$ and $\alpha \in \T$ with $\{\theta+(\alpha,0),\theta+(0,\alpha)\} \subset S$. Then $\phi(\theta+(\alpha,0))-\phi(\theta) = \phi(\theta)-\phi(\theta+(0,\alpha)) = \Delta$, where $\Delta:=c_1(\theta+(\alpha,0))-c_1(\theta)$. 

Notably, $\pi(\Delta) = \alpha$ and thus $||\alpha||_\T \le |\Delta|$.
\end{cor}

\section{Construction}

\subsection{Setup and motivation}

In this section, we shall prove Theorem~\ref{main}. This is done by considering a construction depending on several parameters ($r,\delta,\theta,\mu$, described below), and then optimizing them with respect to a given $N,D$. 

For later reference, we now collect the relevant definitions of our construction. Afterwards, we will conclude this subsection by commenting on their meaning.

\begin{defn}\label{convars} Recall the definitions of $S,\phi,\G$ from Section~\ref{specialized}. 
Given a dimension $D$ and $r,\delta > 0$, we define the following subset of the $D$-dimensional torus grid $\G^D$,
\[S_{r,\delta;D}:= \{\theta \in S^D : ||\phi(\theta)||_2 \in [r-\delta,r)\}.\]
\noindent Then, given $\theta  \in \T^D,\mu \in \G^D$, we define the function \[f=f_{\theta,\mu}:\Z^2\to \G^D; (x,y)\mapsto ((x\theta_1, y\theta_1)+\mu_1,\dots,(x\theta_D,y\theta_D)+\mu_D) .\]Lastly we define the set $A = A_{r,\delta;D;\theta,\mu} := \{(x,y)\in [N]^2:f(x,y)\in S_{r,\delta;D}\}$.
\end{defn}
\noindent Theorem~\ref{main} shall be obtained by finding $r,\delta,\theta$ such that $A$ is corner-free for all choices of $\mu \in \G^D$, with $\E_\mu[|A|]$ being sufficiently large.

For comparison, we briefly recall the construction of $3$-AP-free\footnote{We refer to $3$-term arithmetic progressions as $3$-AP's.} sets by Green-Wolf \cite{wolf}. Green-Wolf considered a random affine homomorphism $g=g_{\theta,\mu}:\Z \to \T^D;n\mapsto n\theta +\mu$ (here $\theta,\mu\in \T^D$). For a $3$-AP $P\subset [N] \cap g^{-1}(\pi([0,1/2)^D))$, one has that $\pi^{-1}(g(P))$ maps to a set $\tilde{P}$ of three collinear points in $\R^D$ (this is due to a standard ``Freiman isomorphism'' argument). By fixing a thin annulus $\AA \subset [0,1/2)^D$, and taking $n\in [N]$ such that $\pi^{-1}(g(\theta n)) \in \AA $, we get our large $3$-AP-free set. 

Our construction works quite similarly. We now use the random affine homomorphism $f:\Z^2 \to \G^D$. For any corner $C\subset [N]^2 \cap f^{-1}(S^D)$, we will have that $\phi(f(C))$ maps to a set $\tilde{P}$ of three colinear points in $\R^D$ (this is now due to the arguments from Section~\ref{gridlem}). So again, we will fix a thin annulus $\AA\subset \R^D$ and obtain a large corner-free set by taking the $v\in [N]^2 \cap f^{-1}(S^D)$ where $\phi(f(v)) \in \AA$. 

The only real difference is that instead of getting an ``approximate homomorphism'' from $\T^D$ to $\R^D$ by ``pulling back'' $\pi^{-1}$ and restricting to $\pi([0,1/2)^D)$, we now use the map $\phi:\G^D \to \R^D$ and restrict to $S^D$ for our approximate homomorphism. This does better, because $S^D$ has greater volume than $\pi([0,1/2)^D)$.

\subsection{Proofs}We remind the reader to consult Definition~\ref{convars} for the definition of the objects $f,A,S_{\cdot,\cdot;\cdot}$.

It remains to deduce Theorem~\ref{main}. We first obtain the following. 
\begin{lem}\label{smalld} Let $x,y,|d|\in [N]$ be such that $\{f(x,y),f(x+d,y),f(x,y+d)\} \subset S_{r,\delta;D}$. Then $\sum_{i=1}^D ||d\theta_i||_\T^2 \le 2r\delta$.
\begin{proof} By Lemma~\ref{phi basic}, there exists $\Delta = \phi\circ f(x+d,y)-\phi\circ f(x,y) = \phi\circ f(x,y)-\phi\circ f(x,y+d)$ in $\R^D$, and by Corollary~\ref{phi iso} we have that $\sum_{i=1}^D ||d\theta_i||_\T^2 \le ||\Delta||_2^2$.

By parallelogram law,
\[2||\phi\circ f(x,y)||_2^2 +2||\Delta||_2^2 = ||\phi\circ f(x,y)+\Delta||_2^2 +||\phi\circ f(x,y)-\Delta||_2^2,\]
\[\implies 2||\Delta||_2^2 \le 4r\delta. \]\end{proof}
\end{lem}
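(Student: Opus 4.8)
The plan is to combine the geometric facts established in Section~\ref{gridlem} with the annular constraint defining $S_{r,\delta;D}$. Write $p := \phi\circ f(x,y)$, $q_1 := \phi\circ f(x+d,y)$ and $q_2 := \phi\circ f(x,y+d)$, all viewed as vectors in $\R^D$. The hypothesis that the three images lie in $S_{r,\delta;D}$ means in particular that each of $p$, $q_1$, $q_2$ lies in the half-open annulus $\{v : \|v\|_2 \in [r-\delta,r)\}$, and also that the underlying points $f(x,y),f(x+d,y),f(x,y+d)$ all lie in $S^D$. This last fact is exactly what licenses the use of Lemma~\ref{phi basic} coordinate-by-coordinate: applying it in each of the $D$ toric coordinates with $\alpha = d\theta_i$ gives a single vector $\Delta \in \R^D$ with $q_1 - p = \Delta$ and $p - q_2 = \Delta$, i.e. $p$ is the midpoint of the segment from $q_2$ to $q_1$, and moreover Corollary~\ref{phi iso} gives $\pi(\Delta_i) = d\theta_i$ so that $\|d\theta_i\|_\T \le |\Delta_i|$ for each $i$, hence $\sum_i \|d\theta_i\|_\T^2 \le \|\Delta\|_2^2$.

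With the midpoint relation in hand, the next step is purely Euclidean: apply the parallelogram law to $p$ and $\Delta$, writing $q_1 = p + \Delta$ and $q_2 = p - \Delta$, to get $2\|p\|_2^2 + 2\|\Delta\|_2^2 = \|q_1\|_2^2 + \|q_2\|_2^2$. Now bound the right side above using $\|q_1\|_2, \|q_2\|_2 < r$, and bound $\|p\|_2^2 \ge (r-\delta)^2$ below; rearranging yields $2\|\Delta\|_2^2 \le 2r^2 - 2(r-\delta)^2 = 4r\delta - 2\delta^2 \le 4r\delta$. Combining with $\sum_i \|d\theta_i\|_\T^2 \le \|\Delta\|_2^2$ gives the claimed bound $\sum_{i=1}^D \|d\theta_i\|_\T^2 \le 2r\delta$.

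The only genuine subtlety — and the step I'd be most careful about — is making sure Lemma~\ref{phi basic} is actually applicable, i.e. that one really does have $\{f(x,y)+(d\theta_i,0),\, f(x,y)+(0,d\theta_i)\}\subset S$ in each coordinate $i$. This is where the corner structure is used: $f(x+d,y)$ and $f(x,y+d)$ differ from $f(x,y)$ precisely by $(d\theta_i,0)$ and $(0,d\theta_i)$ in the $i$-th coordinate (since $f$ is the affine map of Definition~\ref{convars}), and membership in $S^D$ of all three images is part of the hypothesis, so the condition holds. Everything else is routine: the midpoint identity, the parallelogram law, and the two monotone estimates on the norms coming directly from the definition of the annulus $S_{r,\delta;D}$. (Note the displayed computation in the excerpt silently discards the $-2\delta^2$ term and also the lower bound $(r-\delta)^2$ in favour of the slightly lossy but cleaner inequality $2\|\Delta\|_2^2 \le 4r\delta$, which is all that is needed downstream.)
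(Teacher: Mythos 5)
Your proof is correct and follows the same route as the paper: invoke Lemma~\ref{phi basic} and Corollary~\ref{phi iso} coordinate-wise to produce $\Delta$ with $\sum_i\|d\theta_i\|_\T^2\le\|\Delta\|_2^2$, then apply the parallelogram law and the annular bounds $\|q_j\|_2<r$, $\|p\|_2\ge r-\delta$ to conclude $2\|\Delta\|_2^2\le 4r\delta$. The only difference is that you spell out the final arithmetic $2r^2-2(r-\delta)^2=4r\delta-2\delta^2\le 4r\delta$ and the verification that $f(x+d,y)$, $f(x,y+d)$ shift the $i$-th coordinate by $(d\theta_i,0)$, $(0,d\theta_i)$, both of which the paper leaves implicit.
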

Let $B_0\subset \R^D$ be the ball around the origin with Euclidean radius $\sqrt{2r\delta}$. Let $B = \pi(B_0)$. Using Lemma~\ref{smalld}, we can now get the following.

\begin{cor}\label{size}Suppose $\theta \in \G^D$ is such that $d\theta \not \in B $ for all $d \in [N]$. 

Then there exists a choice of $\mu \in \G^D$ such that with $A = A_{r,\delta;D;\theta,\mu}$, we have
\[A \subset [N]^2 \textrm{ is corner-free}\]
\[|A| \ge N^2 \Vol(S_{r,\delta;D}).\]
\begin{proof}We note that $-d\theta \in B$ if and only if $d\theta \in B$, hence our assumption implies $d\theta \not\in B$ whenever $|d|\in [N]$. Due to Lemma~\ref{smalld}, we have that $A_{r,\delta;D;\theta,\mu}$ will be corner-free for every choice of $\mu \in \G^D$.

We now choose $\mu \in \G^D$ randomly. For each $(x,y) \in [N]^2$, we see that $\P_\mu(f(x,y) \in S_{r,\delta;D}) = \Vol(S_{r,\delta;D})$. It follows that
\[\E_\mu[|A_{r,\delta;D;\theta,\mu}|] = \sum_{(x,y)\in [N]^2} \P_\mu(f(x,y)\in S_{r,\delta;D}) = N^2\Vol(S_{r,\delta;D}).\]By the probabalistic method, we conclude there is some choice of $\mu \in \G^D$ where $|A_{r,\delta;D;\theta,\mu}|$ is at least the RHS, which gives the desired result.\end{proof}
\end{cor}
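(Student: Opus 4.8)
The plan is to notice that the corner-free conclusion will hold for \emph{every} admissible $\mu$, so that $\mu$ only needs to be chosen (probabilistically) to make $|A|$ large.

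First I would unwind the definition of $B$. Since $B_0 \subset \R^D$ is the Euclidean ball of radius $\sqrt{2r\delta}$ about the origin and $B = \pi(B_0)$, any $t \in \T^D$ with $\sum_{i=1}^D ||t_i||_\T^2 \le 2r\delta$ lies in $B$: pick the representative of each $t_i$ in $[-1/2,1/2)$, so that the resulting vector of representatives has Euclidean norm $\le \sqrt{2r\delta}$ and hence lies in $B_0$. Consequently Lemma~\ref{smalld} says precisely that whenever $x,y,|d|\in[N]$ with $\{f(x,y),f(x+d,y),f(x,y+d)\}\subset S_{r,\delta;D}$, we have $d\theta\in B$. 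I would also remove the absolute value from the hypothesis: as $B_0=-B_0$ we get $B=-B$, and $||(-d)\theta_i||_\T=||d\theta_i||_\T$, so ``$d\theta\notin B$ for all $d\in[N]$'' is equivalent to ``$d\theta\notin B$ for all $d$ with $|d|\in[N]$''.

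Given this, I would fix an arbitrary $\mu\in\G^D$ and suppose $A=A_{r,\delta;D;\theta,\mu}$ contained a nontrivial corner $(x,y),(x+d,y),(x,y+d)$ with $d\neq 0$. All three points lie in $[N]^2$, so $1\le|d|\le N-1$ and thus $|d|\in[N]$; and by definition of $A$ their $f$-images all lie in $S_{r,\delta;D}$. Lemma~\ref{smalld} then forces $d\theta\in B$, contradicting the (upgraded) hypothesis. So $A_{r,\delta;D;\theta,\mu}$ is corner-free for \emph{every} $\mu$, and it only remains to pick $\mu$ with $|A|$ large. For that I would take $\mu$ uniform on $\G^D$ (with total-mass-one Haar measure). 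For fixed $(x,y)\in[N]^2$, the map $\mu\mapsto f_{\theta,\mu}(x,y)$ is translation by the fixed element $((x\theta_1,y\theta_1),\dots,(x\theta_D,y\theta_D))$ of $\G^D$, hence measure-preserving, so $\P_\mu(f(x,y)\in S_{r,\delta;D})=\Vol(S_{r,\delta;D})$; linearity of expectation gives $\E_\mu[|A_{r,\delta;D;\theta,\mu}|]=N^2\Vol(S_{r,\delta;D})$, and the probabilistic method produces a $\mu$ attaining at least this value. That same $\mu$ satisfies both conclusions.

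There is no real obstacle here: the geometric heart of the matter was already isolated in Lemma~\ref{smalld}. The only places needing a little care are the passage from $d\in[N]$ to $|d|\in[N]$ via $B=-B$, and the (standard) fact that translation is Haar-measure-preserving on $\G^D$, which is what makes the hitting probability exactly $\Vol(S_{r,\delta;D})$.
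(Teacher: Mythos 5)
Your proposal is correct and is essentially the paper's own argument: observe that $B=-B$ upgrades the hypothesis to all $|d|\in[N]$, deduce corner-freeness for every $\mu$ from Lemma~\ref{smalld}, and then choose $\mu$ by averaging. The only difference is that you spell out why the conclusion of Lemma~\ref{smalld} places $d\theta$ in $B$ (via representatives in $[-1/2,1/2)$), a step the paper leaves implicit.
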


We shall conclude by choosing our parameters so $\Vol(S_{r,\delta;D})$ is large while $\Vol(B)$ is (sufficiently) small, allowing us to use Corollary~\ref{size}.

\begin{prp}\label{pigeon}There exists an absolute constant $c^*>0$ so that the following holds. For each $D,\delta$, there exists $r$ such that $\Vol(S_{r,\delta;D}) \ge c^*\delta(3/4)^D$.
\begin{proof} Let $m^2 = \E_{\mu\sim \G}[ ||\phi(\mu)||_2^2\, | \,\mu \in S] = \frac{5}{24} >0$.

By Hoeffding's inequality (a standard concentration result, see \cite[Theorem~2]{hoeffding}), we have that $\P_{\mu\sim S^D} [ |Dm^2 -||\phi(\mu)||_2^2| >D^{1/2}] \le 2\exp(-2) = 1-\ep$ for some $\ep>0$. Hence, conditioned on $\mu \in S^D$, we have $||\phi(\mu)||_2^2 = m^2D \pm \sqrt{D}$ or equivalently $||\phi(\mu)||_2 = m\sqrt{D} \pm K$ with positive probability $\ep$ (here $K = O(1)$). Chopping this error into $\lfloor K\delta^{-1}\rfloor$ intervals of length $\delta$, $||\phi(\mu)||_2$ lands in one of these intervals with probability $\ge \frac{\ep}{2K}\delta\gg \delta$ by pigeonhole (so we take $c^* = \ep/2K$). 

The result follows as $\Vol(S_{r,\delta;D}) = (3/4)^D \P_{\mu\sim S^D}(||\phi(\mu)||_2 \in [r-\delta,r))$.\end{proof}
\end{prp}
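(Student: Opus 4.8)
The plan is to reduce the claimed volume bound to a one-dimensional anticoncentration estimate for $||\phi(\mu)||_2$ with $\mu$ uniform on $S^D$, and then obtain that estimate from Hoeffding's inequality together with a pigeonhole argument, exactly along the lines sketched in the excerpt above.

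\textbf{Reduction.} First I would record that $\Vol(S)=3/4$: the complement of $S$ inside $[0,1)^2$ is the union of the two right triangles $\{c_1+c_2<1/2\}$ and $\{c_1+c_2\ge 3/2\}$, each of area $1/8$. Hence $\Vol(S^D)=(3/4)^D$, and since $S_{r,\delta;D}=\{\theta\in S^D:||\phi(\theta)||_2\in[r-\delta,r)\}$ we get
\[\Vol(S_{r,\delta;D})=(3/4)^D\,\P_{\mu\sim S^D}\bigl(||\phi(\mu)||_2\in[r-\delta,r)\bigr),\]
so it suffices to produce $r$ with $\P_{\mu\sim S^D}\bigl(||\phi(\mu)||_2\in[r-\delta,r)\bigr)\gg\delta$, where $\mu$ is uniform on $S^D$.

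\textbf{Concentration.} Write $\mu=(\mu_1,\dots,\mu_D)$ with the $\mu_i$ i.i.d.\ uniform on $S$. Then $||\phi(\mu)||_2^2=\sum_{i=1}^D\phi(\mu_i)^2$ is a sum of i.i.d.\ random variables taking values in $[0,1]$ (since $\phi=c_1-c_2$ with $c_1,c_2\in[0,1)$), of common mean $m^2:=\E[\phi(\mu_1)^2]=\tfrac43\iint_S(c_1-c_2)^2\,dc_1\,dc_2=\tfrac5{24}$; this last is a short integral, e.g.\ via the substitution $s=c_1+c_2$, $t=c_1-c_2$, which has Jacobian $1/2$ and turns $S$ into $\{(s,t):|t|\le\min(s,2-s),\ s\in[1/2,3/2]\}$. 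Hoeffding's inequality (with $D$ summands, each of range $1$) then gives $\P\bigl(|Dm^2-||\phi(\mu)||_2^2|>\sqrt D\bigr)\le 2e^{-2}$, so with probability at least $\ep:=1-2e^{-2}>0$ one has $||\phi(\mu)||_2\in I_D:=\bigl[\sqrt{\max(0,\,Dm^2-\sqrt D)},\ \sqrt{Dm^2+\sqrt D}\,\bigr]$.

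\textbf{Uniform length bound and pigeonhole.} The one step that needs a touch of care (and is where the author's ``$K=O(1)$'' hides something) is that $|I_D|$ is bounded by an absolute constant $K$ uniformly in $D$. If $Dm^2\ge\sqrt D$, then by $\sqrt a-\sqrt b=(a-b)/(\sqrt a+\sqrt b)$ we get $|I_D|=2\sqrt D/\bigl(\sqrt{Dm^2+\sqrt D}+\sqrt{Dm^2-\sqrt D}\bigr)\le 2/m$; if $Dm^2<\sqrt D$ (only finitely many $D$), then $D<m^{-4}$, so $Dm^2+\sqrt D<2m^{-2}$ and $|I_D|<\sqrt2/m$. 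Hence $K:=2/m$ works for all $D$. Assuming $\delta$ is at most an absolute constant (the only regime relevant for the application), cover $I_D$ by at most $\lceil K/\delta\rceil+1=O(1/\delta)$ half-open intervals of length $\delta$; by pigeonhole one such interval $[r-\delta,r)$ carries $\P_{\mu\sim S^D}$-mass at least $\ep/(\lceil K/\delta\rceil+1)\gg\delta$. Plugging this into the displayed identity yields $\Vol(S_{r,\delta;D})\ge c^*\delta(3/4)^D$ with $c^*=\Theta(\ep/K)>0$ absolute. The only genuine obstacle is the uniform-in-$D$ control of $|I_D|$ just described; everything else is routine computation and a standard concentration bound.
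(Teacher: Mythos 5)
Your proof is correct and follows essentially the same route as the paper's: the identity $\Vol(S_{r,\delta;D})=(3/4)^D\,\P_{\mu\sim S^D}(||\phi(\mu)||_2\in[r-\delta,r))$, Hoeffding applied to $\sum_i\phi(\mu_i)^2$ with per-coordinate mean $m^2=5/24$, and pigeonhole over $O(1/\delta)$ intervals of the resulting $O(1)$-length window. Your explicit verification that $|I_D|\le 2/m$ uniformly in $D$ (the paper's ``$K=O(1)$'') and the computation of $\Vol(S)=3/4$ are welcome details, not a different argument.
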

Lastly, we note that $\Vol(B)\le \Vol(B_0) $ (the volume of the Euclidean ball in $\R^D$ with Euclidean radius $\sqrt{2r\delta}$), and $\Vol(B_0) \le \left(\frac{O(1)}{D}r\delta\right)^{D/2}$. Furthermore, given $r\le \sqrt{D}$ (which we may assume WLOG as $||\phi(\G^D)||_2$ is supported on $[0,\sqrt{D}]$), the above simplifies to $\Vol(B)\le \left(\frac{O(\delta)}{\sqrt{D}}\right)^{D/2}$.

\begin{proof}[Proof of Theorem~\ref{main}] We simply apply Corollary~\ref{size} for an appropriate choice of parameters.

In particular, we take $\delta = c\sqrt{D}N^{-2/D}$ for some constant $c>0$ and $r$ according to Proposition~\ref{pigeon}, so that $\Vol(S_{r,\delta;D})\ge c^* \delta (3/4)^D$. We have that $N\Vol(B) = O(c)^{D/2}$, thus for sufficiently small $c$ (with respect to the implicit constant), we have $N\Vol(B)<1$. So, choosing $\theta\in \T^D$ uniformly at random, we have with positive probability $\theta d \not\in B$ for all $d\in [N]$ by a union bound.

Thus we get a corner-free set $A \subset [N]^2$ with \[|A| \ge N^2\Vol(S_{r,\delta;D}) \ge N^{2-2/D} (3/4)^D(cc^* \sqrt{D}) \gg N^{2-2/D}(3/4)^D\sqrt{D}.\]\end{proof}

{}
\end{document}